\newtheorem{theorem}{Theorem}[section]
\newtheorem{proposition}{Proposition}[section]
\newtheorem{definition}{Definition}[section]
\newtheorem{example}{Example}[section]
\newtheorem{remark}{Remark}[section]
\numberwithin{equation}{section}
\begin{document}
\date{{\scriptsize Received: , Accepted: .}}
\title[NEW FIXED FIGURE RESULTS WITH THE NOTION OF $k$-ELLIPSE]{NEW FIXED
FIGURE RESULTS WITH THE NOTION OF $k$-ELLIPSE}
\author[N. TA\c{S}]{Nihal TA\c{S}}
\address{Bal\i kesir University, Department of Mathematics, 10145 Bal\i
kesir, Turkey}
\email{nihaltas@balikesir.edu.tr}
\author[H. AYT\.{I}MUR]{H\"{u}lya AYT\.{I}MUR}
\address{Bal\i kesir University, Department of Mathematics, 10145 Bal\i
kesir, Turkey}
\email{hulya.aytimur@balikesir.edu.tr}
\author[\c{S}. G\"{U}VEN\c{C}]{\c{S}aban G\"{U}VEN\c{C}}
\address{Bal\i kesir University, Department of Mathematics, 10145 Bal\i
kesir, Turkey}
\email{sguvenc@balikesir.edu.tr}
\maketitle

\begin{abstract}
In this paper, as a geometric approach to the fixed-point theory, we prove
new fixed-figure results using the notion of $k$-ellipse on a metric space.
For this purpose, we are inspired by the Caristi type contraction, Kannan
type contraction, Chatterjea type contraction and \'{C}iri\'{c} type
contraction. After that, we give some existence and uniqueness theorems of a
fixed $k$-ellipse. We also support our obtained results with illustrative
examples. Finally, we present a new application to the $S$-Shaped Rectified
Linear Activation Unit ($SReLU$) to show the importance of our theoretical
results.\newline
\textbf{Keywords:} Fixed figure, fixed $k$-ellipse, metric space, activation
function.\newline
\textbf{MSC(2010):} 54H25; 47H09; 47H10.
\end{abstract}

%% The title of the paper goes here.  Edit your title.

%% Now edit the following to give First Author name and address:
%% $^*$ for the corresponding author.

%% If there are three of more authors they are added in the obvious way.

%------------------------------------------------------------------------------------%
%%
%% Use the following command to make the title for the paper.
%
%\CoverPage

%
%%% The following environment is needed for the abstract.
%%%

\section{\textbf{Introduction and Background}}

\label{sec:intro} Recently, the geometry of the fixed-point set $%
Fix(T)=\left\{ x\in X:Tx=x\right\} $ of a self-mapping $T:X\rightarrow X$
has been studied as a new approach to the fixed-point theory. This approach
began with the \textquotedblleft fixed-circle problem\textquotedblright\ in 
\cite{Ozgur-malaysian}. This problem gains the importance since the
self-mapping has non-unique fixed points and the set of non-unique fixed
points includes some geometric figures such as circle, disc, ellipse etc.
For example, let us consider the usual metric space $\left( 
%TCIMACRO{\U{211d} }%
%BeginExpansion
\mathbb{R}
%EndExpansion
,d\right) $ and the self-mapping $T:%
%TCIMACRO{\U{211d} }%
%BeginExpansion
\mathbb{R}
%EndExpansion
\rightarrow 
%TCIMACRO{\U{211d} }%
%BeginExpansion
\mathbb{R}
%EndExpansion
$ defined as%
\begin{equation*}
Tx=\left\{ 
\begin{array}{ccc}
x & , & x\geq -1 \\ 
-x & , & x<-1%
\end{array}%
\right. \text{,}
\end{equation*}%
for all $x\in 
%TCIMACRO{\U{211d} }%
%BeginExpansion
\mathbb{R}
%EndExpansion
$. Then we have 
\begin{equation*}
C_{0,1}=\left\{ x\in 
%TCIMACRO{\U{211d} }%
%BeginExpansion
\mathbb{R}
%EndExpansion
:\left\vert x\right\vert =1\right\} =\left\{ -1,1\right\} \subset
Fix(T)=[-1,\infty )\text{,}
\end{equation*}%
that is, the fixed-point set $Fix(T)$ includes the unit circle. Similarly,
we get%
\begin{equation*}
D_{0,1}=\left\{ x\in 
%TCIMACRO{\U{211d} }%
%BeginExpansion
\mathbb{R}
%EndExpansion
:\left\vert x\right\vert \leq 1\right\} =\left[ -1,1\right] \subset
Fix(T)=[-1,\infty )\text{,}
\end{equation*}%
that is, the fixed-point set $Fix(T)$ includes the unit disc.

The notion of a fixed figure was defined as a generalization of the notions
of a fixed circle as follows:

A geometric figure $\mathcal{F}$ (a circle, an ellipse, a hyperbola etc.)
contained in the fixed point set $Fix\left( T\right) $ is called a\textit{\
fixed figure} (a fixed circle, a fixed ellipse, a fixed hyperbola etc.) of
the self-mapping $T$ (see \cite{Ozgur-figure}). For example, some
fixed-figure theorems were obtained using different techniques (see, \cite%
{Ercinar}, \cite{Joshi}, \cite{Ozgur-figure} and \cite{Tas-chapter} for more
details).

A $k$-ellipse is the locus of points of the plane whose sum of distances to
the $k$ foci is a constant $d$. The $1$-ellipse is the circle, and the $2$%
-ellipse is the classic ellipse. $k$-ellipses can be considered as
generalizations of ellipses. These special curves allow more than two foci 
\cite{NPS-2008}. $k$-ellipses have many names such as: $n$-ellipse \cite%
{Sekino}, multifocal ellipse \cite{Erdos}, polyellipse \cite{Melzak} and
egglipse \cite{Sahadevan}. It is noteworthy to mention that these curves
were first studied by Scottish mathematician and scientist James Clerk
Maxwell in 1846 \cite{Maxwell}.

From the above reasons, in this paper, we investigate new solutions to the
fixed-figure problem. Therefore, we use some known contractive conditions
such as Caristi type contraction, Kannan type contraction, Chatterjea type
contraction and \'{C}iri\'{c} type contraction to obtain some existence and
uniqueness theorems of a fixed $k$-ellipse. Also, we give necessary examples
to support our obtained results. Finally, we present a new application to
the $S$-Shaped Rectified Linear Activation Unit ($SReLU$) to show the
importance of our theoretical results.

\section{\textbf{Main Results}}

\label{sec:1} In this section, we give the definition of a $k$-ellipse with
some examples and prove new fixed-figure theorems on metric spaces. Also, we
investigate some properties of the obtained theorems with necessary examples.

\begin{definition}
\label{dfn1} Let $\left( X,d\right) $ be a metric space. The $k$-ellipse is
defined by%
\begin{equation*}
E\left[ x_{1},...,x_{r};r\right] =\left\{ x\in X:\sum_{i=1}^{k}d\left(
x,x_{i}\right) =r\right\} .
\end{equation*}%
Clearly, if $i=2$ then we get an ellipse and if $i=1$ then we get a circle
on a metric space with the same radius.
\end{definition}

Now, we give the following examples deal with $k$-ellipses.

\begin{example}
\label{sekil1} Let $\left( X=%
%TCIMACRO{\U{211d} }%
%BeginExpansion
\mathbb{R}
%EndExpansion
^{2},d\right) $ be a metric space with the metric $d:X\times X\rightarrow 
%TCIMACRO{\U{211d} }%
%BeginExpansion
\mathbb{R}
%EndExpansion
$ defined as%
\begin{equation*}
d\left( a,b\right) =\left\vert u_{1}-u_{2}\right\vert +\left\vert
v_{1}-v_{2}\right\vert \text{,}
\end{equation*}%
such that $a=\left( u_{1},v_{1}\right) ,b=\left( u_{2},v_{2}\right) \in X$.
Let us define $3$-ellipse for $x_{1}=\left( 1,0\right) ,$ $x_{2}=\left(
0,0\right) ,$ $x_{3}=\left( 0,1\right) $ as follows $($see Figure \ref{fig:1}%
$):$%
\begin{equation*}
E\left[ x_{1},x_{2},x_{3};r\right] =\left\{ p\left( x,y\right) \in
X:\left\vert x-1\right\vert +\left\vert y\right\vert +\left\vert
x\right\vert +\left\vert y\right\vert +\left\vert x\right\vert +\left\vert
y-1\right\vert =r\right\} .
\end{equation*}
\end{example}

\begin{figure}[h]
\centering
\includegraphics[width=.4\linewidth]{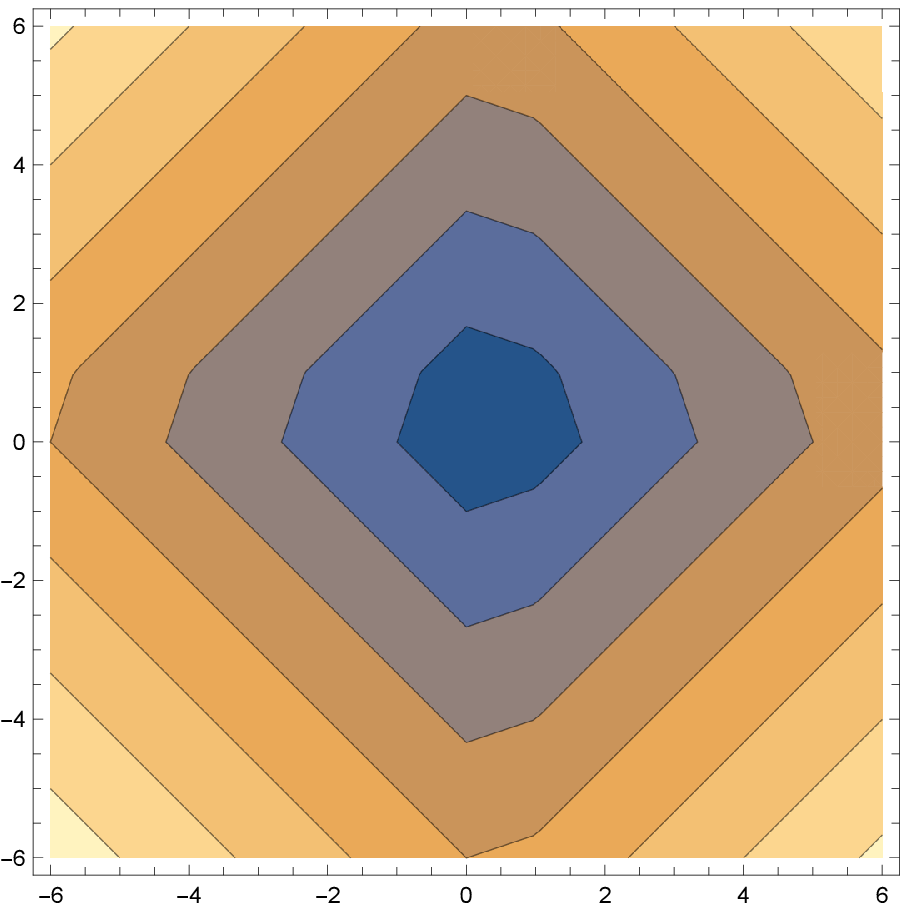} 
\caption{\Small The $3$-ellipse for
$x_{1}=\left( 1,0\right) ,$ $x_{2}=\left( 0,0\right) ,$ $x_{3}=\left(
0,1\right) $.} \label{fig:1}
\end{figure}

\begin{example}
\label{sekil2} Let $\left( X=%
%TCIMACRO{\U{211d} }%
%BeginExpansion
\mathbb{R}
%EndExpansion
^{2},d\right) $ be a metric space with the metric $d:X\times X\rightarrow 
%TCIMACRO{\U{211d} }%
%BeginExpansion
\mathbb{R}
%EndExpansion
$ defined as%
\begin{equation*}
d\left( a,b\right) =\sqrt{\left( u_{1}-u_{2}\right) ^{2}+\left(
v_{1}-v_{2}\right) ^{2}}\text{,}
\end{equation*}%
such that $a=\left( u_{1},v_{1}\right) ,b=\left( u_{2},v_{2}\right) \in X$.
Let us define $3$-ellipse for $x_{1}=\left( 3,0\right) ,$ $x_{2}=\left(
0,0\right) ,$ $x_{3}=\left( 0,4\right) $ as follows ($($see Figure \ref%
{fig:2}$):$%
\begin{equation*}
E\left[ x_{1},x_{2},x_{3};r\right] =\left\{ p\left( x,y\right) \in X:\sqrt{%
\left( x-3\right) ^{2}+y^{2}}+\sqrt{x^{2}+y^{2}}+\sqrt{x^{2}+\left(
y-1\right) ^{2}}=r\right\} .
\end{equation*}
\end{example}

\begin{figure}[h]
\centering
\includegraphics[width=.4\linewidth]{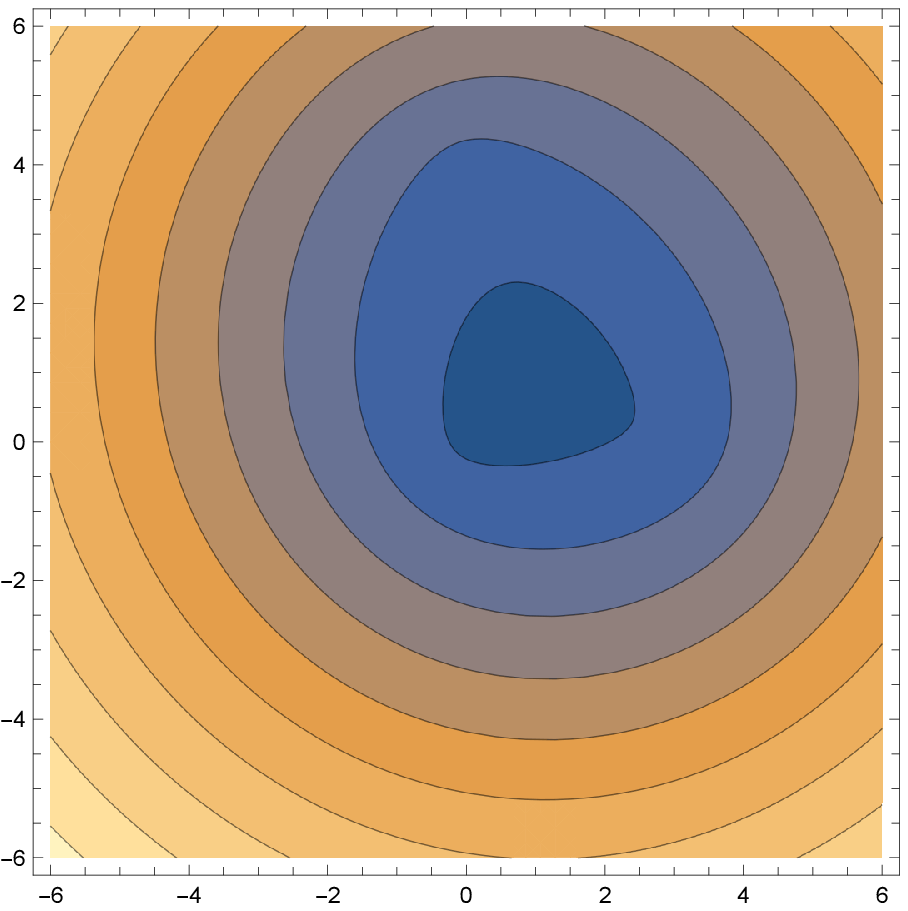} 
\caption{\Small The $3$-ellipse for
$x_{1}=\left( 3,0\right) ,$ $x_{2}=\left( 0,0\right) ,$ $x_{3}=\left(
0,4\right) $.} \label{fig:2}
\end{figure}

\begin{example}
\label{sekil3} Let $\left( X=%
%TCIMACRO{\U{211d} }%
%BeginExpansion
\mathbb{R}
%EndExpansion
^{2},d\right) $ be a metric space with the metric $d:X\times X\rightarrow 
%TCIMACRO{\U{211d} }%
%BeginExpansion
\mathbb{R}
%EndExpansion
$ defined as%
\begin{equation*}
d\left( a,b\right) =\max \left\{ \left\vert u_{1}-u_{2}\right\vert
+\left\vert v_{1}-v_{2}\right\vert \right\} ,
\end{equation*}%
such that $a=\left( u_{1},v_{1}\right) ,b=\left( u_{2},v_{2}\right) \in X.$
Let us define $3$-ellipse for $x_{1}=\left( 1,0\right) ,$ $x_{2}=\left(
0,0\right) ,$ $x_{3}=\left( 0,1\right) $ as follows $($see Figure \ref{fig:3}%
$):$%
\begin{equation*}
E\left[ x_{1},x_{2},x_{3};r\right] =\left\{ p\left( x,y\right) \in X:\max
\left\{ \left\vert x-1\right\vert ,\left\vert y\right\vert \right\} +\max
\left\{ \left\vert x\right\vert ,\left\vert y\right\vert \right\} +\max
\left\{ \left\vert x\right\vert ,\left\vert y-1\right\vert \right\}
=r\right\} .
\end{equation*}
\end{example}

\begin{figure}[h]
\centering
\includegraphics[width=.4\linewidth]{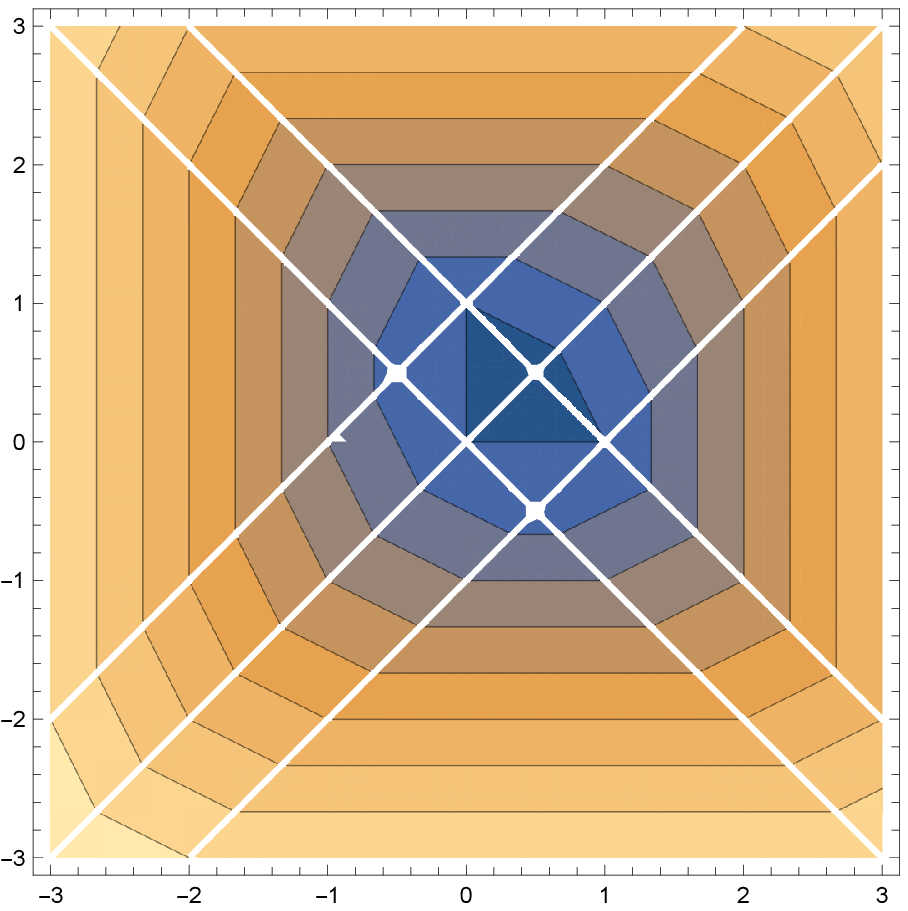} 
\caption{\Small The $3$-ellipse for
$x_{1}=\left( 1,0\right) ,$ $x_{2}=\left( 0,0\right) ,$ $x_{3}=\left(
0,1\right)$.} \label{fig:3}
\end{figure}

\begin{example}
\label{sekil4} Let $\left( X=%
%TCIMACRO{\U{211d} }%
%BeginExpansion
\mathbb{R}
%EndExpansion
^{3},d\right) $ be a metric space with the metric $d:X\times X\rightarrow 
%TCIMACRO{\U{211d} }%
%BeginExpansion
\mathbb{R}
%EndExpansion
$ defined as%
\begin{equation*}
d\left( a,b\right) =\sqrt{\left( u_{1}-u_{2}\right) ^{2}+\left(
v_{1}-v_{2}\right) ^{2}+\left( z_{1}-z_{2}\right) ^{2}},
\end{equation*}%
such that $a=\left( u_{1},v_{1},z_{1}\right) ,b=\left(
u_{2},v_{2},z_{2}\right) \in X.$ Let us define $3$-ellipse for $x_{1}=\left(
5,0,0\right) ,$ $x_{2}=\left( 0,2,0\right) ,$ $x_{3}=\left( 0,0,1\right) $
as follows $($see Figure \ref{fig:4}$):$%
\begin{equation*}
E\left[ x_{1},x_{2},x_{3};r\right] =\left\{ p\left( x,y\right) \in X:\sqrt{%
\left( x-5\right) ^{2}+y^{2}+z^{2}}+\sqrt{x^{2}+\left( y-2\right) ^{2}+z^{2}}%
\right.
\end{equation*}%
\begin{equation*}
\left. +\sqrt{x^{2}+y^{2}+\left( z-1\right) ^{2}}=r\right\} .
\end{equation*}
\end{example}

\begin{figure}[h]
\centering
\includegraphics[width=.4\linewidth]{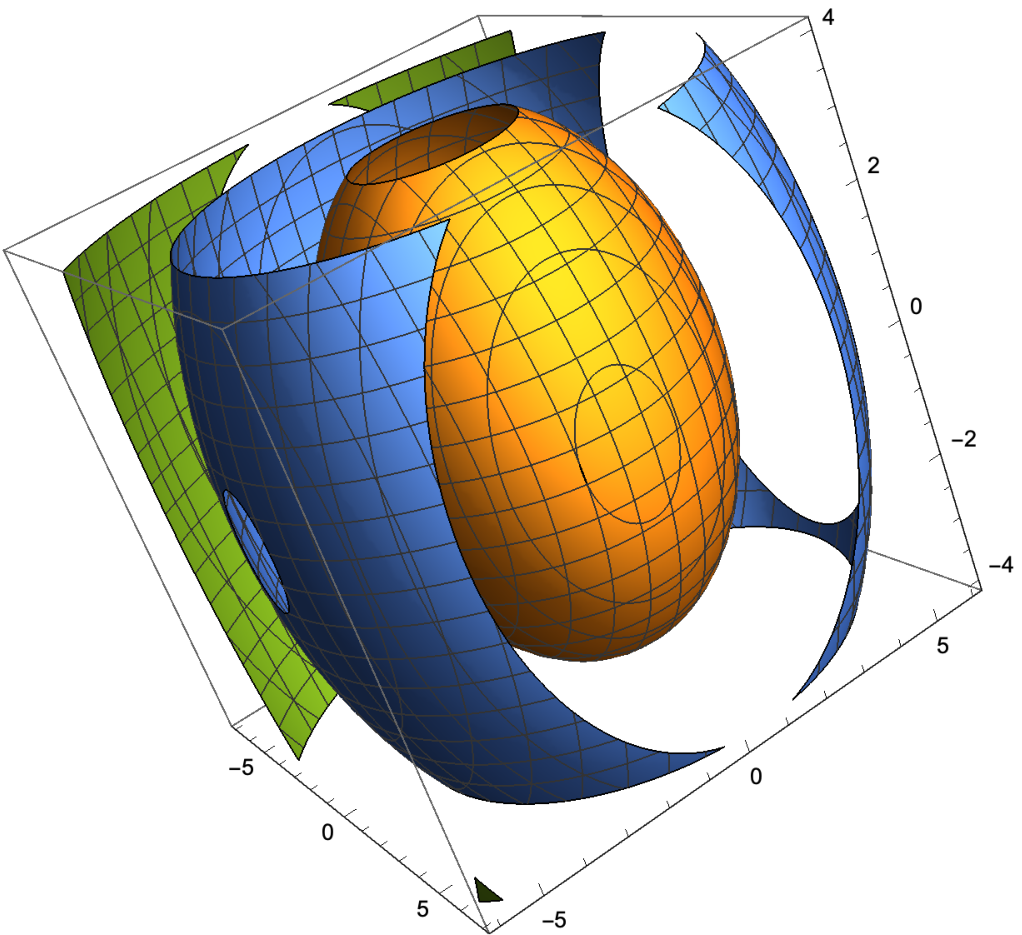} 
\caption{\Small The $3$-ellipse for
$x_{1}=\left( 5,0,0\right) ,$ $x_{2}=\left( 0,2,0\right) ,$ $x_{3}=\left(
0,0,1\right) $.} \label{fig:4}
\end{figure}

\begin{example}
\label{sekil5} Let $\left( X=%
%TCIMACRO{\U{211d} }%
%BeginExpansion
\mathbb{R}
%EndExpansion
^{3},d\right) $ be a metric space with the metric $d:X\times X\rightarrow 
%TCIMACRO{\U{211d} }%
%BeginExpansion
\mathbb{R}
%EndExpansion
$ defined as%
\begin{equation*}
d\left( a,b\right) =\sqrt[4]{\left( u_{1}-u_{2}\right) ^{4}+\left(
v_{1}-v_{2}\right) ^{4}+\left( z_{1}-z_{2}\right) ^{4}},
\end{equation*}%
such that $a=\left( u_{1},v_{1},z_{1}\right) ,b=\left(
u_{2},v_{2},z_{2}\right) \in X.$ Let us define $3$-ellipse for $x_{1}=\left(
-1,0,0\right) ,$ $x_{2}=\left( 1,0,0\right) ,$ $x_{3}=\left( 0,1,0\right) $
as follows $($see Figure \ref{fig:5}$):$%
\begin{equation*}
E\left[ x_{1},x_{2},x_{3};r\right] =\left\{ p\left( x,y\right) \in X:\sqrt[4]%
{\left( x+1\right) ^{4}+y^{4}+z^{4}}\right.
\end{equation*}%
\begin{equation*}
\left. +\sqrt[4]{\left( x-1\right) ^{4}+y^{4}+z^{4}}+\sqrt[4]{x^{4}+\left(
y-1\right) ^{4}+z^{4}}=r\right\} .
\end{equation*}
\end{example}

\begin{figure}[h]
\centering
\includegraphics[width=.4\linewidth]{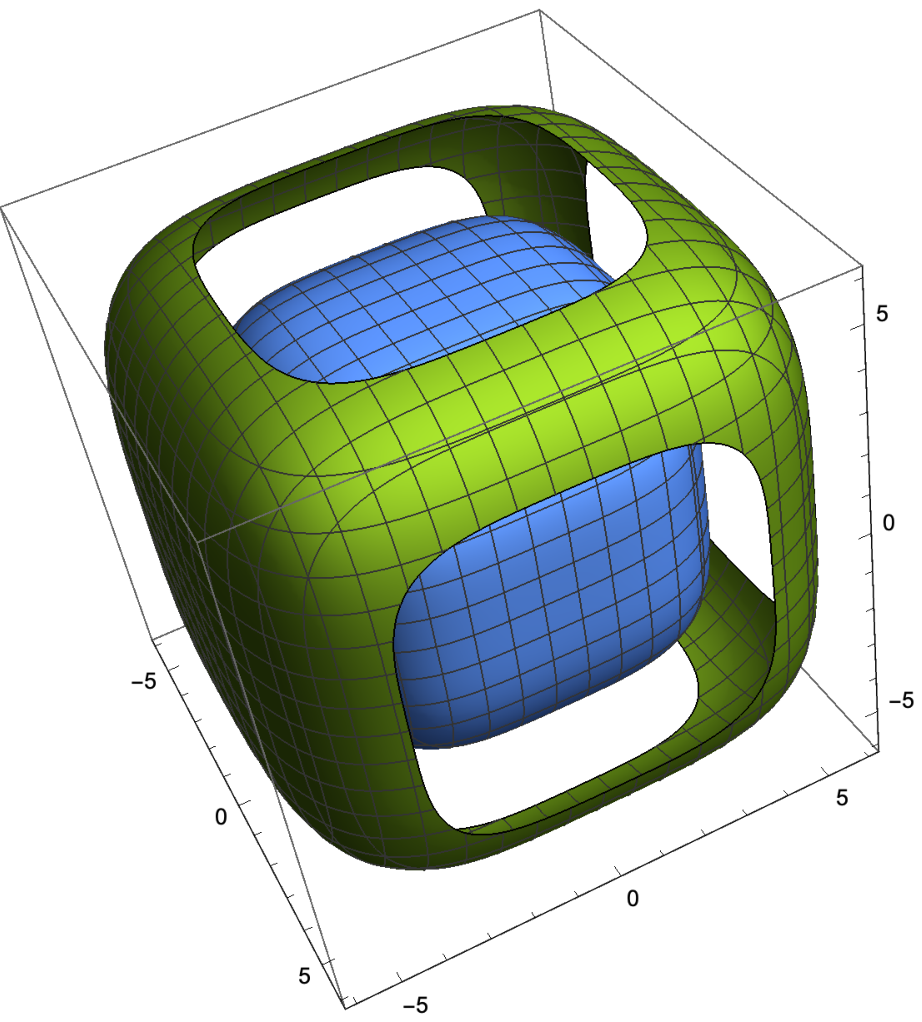} 
\caption{\Small The $3$-ellipse for
$x_{1}=\left( -1,0,0\right) ,$ $x_{2}=\left( 1,0,0\right) ,$ $x_{3}=\left(
0,1,0\right) $.} \label{fig:5}
\end{figure}

\begin{example}
\label{sekil6} Let $\left( X=%
%TCIMACRO{\U{211d} }%
%BeginExpansion
\mathbb{R}
%EndExpansion
^{2},d\right) $ be a metric space with the metric $d:X\times X\rightarrow 
%TCIMACRO{\U{211d} }%
%BeginExpansion
\mathbb{R}
%EndExpansion
$ defined as%
\begin{equation*}
d\left( a,b\right) =\sqrt{\left( u_{1}-u_{2}\right) ^{2}+\left(
v_{1}-v_{2}\right) ^{2}},
\end{equation*}%
such that $a=\left( u_{1},v_{1}\right) ,b=\left( u_{2},v_{2}\right) \in X.$
Let us define $4$-ellipse for $x_{1}=\left( 2,0\right) ,$ $x_{2}=\left(
0,0\right) ,$ $x_{3}=\left( 0,3\right) ,$ $x_{4}=\left( -2,0\right) $ as
follows $($see Figure \ref{fig:6}$):$%
\begin{equation*}
E\left[ x_{1},x_{2},x_{3},x_{4};r\right] =\left\{ p\left( x,y\right) \in X:%
\sqrt{\left( x-2\right) ^{2}+y^{2}}+\sqrt{x^{2}+y^{2}}\right.
\end{equation*}%
\begin{equation*}
\left. +\sqrt{x^{2}+\left( y-3\right) ^{2}}+\sqrt{\left( x+2\right)
^{2}+y^{2}}=r\right\} .
\end{equation*}
\end{example}

\begin{figure}[h]
\centering
\includegraphics[width=.4\linewidth]{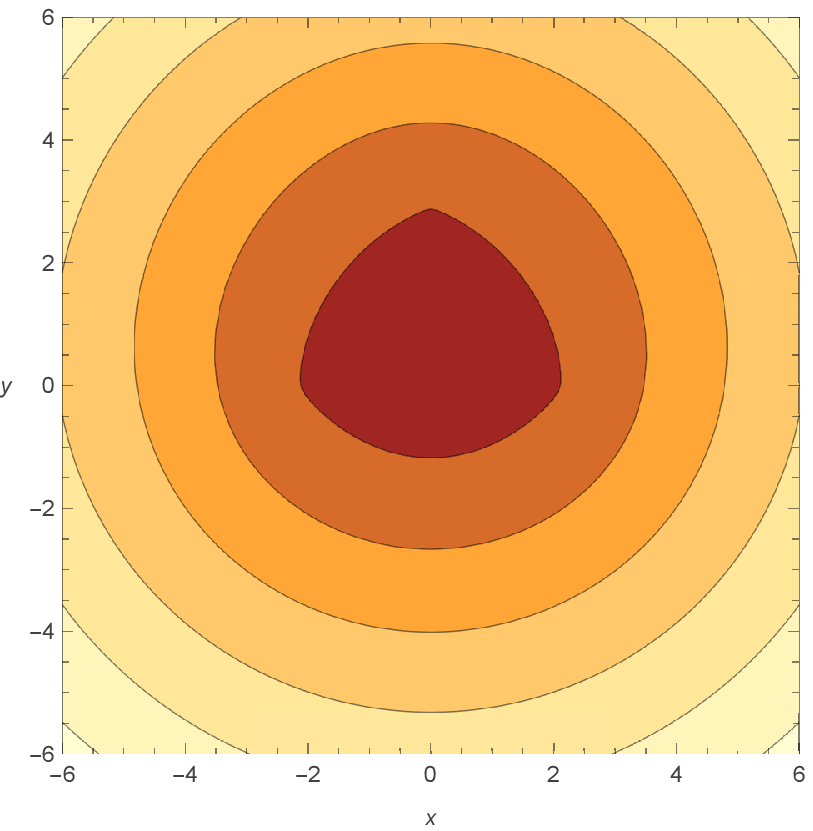} 
\caption{\Small The $4$-ellipse for $x_{1}=\left( 2,0\right) ,$ $x_{2}=\left( 0,0\right) ,$ $x_{3}=\left( 0,3\right) ,$ $x_{4}=\left(
-2,0\right) $.} \label{fig:6}
\end{figure}

We begin with the following proposition:

\begin{proposition}
\label{prop1} Let $\left( X,d\right) $ be a metric space and $E\left[
x_{1},...,x_{k};r\right] $, $E\left[ x_{1}^{^{\prime }},...,x_{k}^{^{\prime
}};r^{^{\prime }}\right] $ two $k$-ellipses. Then there exists at least one
of self-mapping $T:X\rightarrow X$ such that $T$ fixes the $k$-ellipses $E%
\left[ x_{1},...,x_{k};r\right] $ and$\ E\left[ x_{1}^{^{\prime
}},...,x_{k}^{^{\prime }};r^{^{\prime }}\right] .$
\end{proposition}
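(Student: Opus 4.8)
The statement only claims the \emph{existence} of one self-mapping fixing both $k$-ellipses, so the plan is simply to construct such a map explicitly. The fastest route is to take the identity map $T=I_{X}$: since $Fix\left( I_{X}\right) =X$, every subset of $X$ is contained in $Fix\left( I_{X}\right) $, in particular each of the two $k$-ellipses, and therefore, by the notion of a fixed figure recalled in Section \ref{sec:intro}, the map $I_{X}$ fixes both of them.

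To record a less trivial witness, I would instead fix an arbitrary point $a\in X$ and define $T:X\rightarrow X$ by
\begin{equation*}
Tx=\left\{
\begin{array}{ccc}
x & , & x\in E\left[ x_{1},...,x_{k};r\right] \cup E\left[ x_{1}^{\prime },...,x_{k}^{\prime };r^{\prime }\right] \\
a & , & \text{otherwise}
\end{array}
\right.
\end{equation*}
for all $x\in X$. The two cases are prescribed on complementary subsets of $X$, so $T$ is a well-defined self-mapping no matter whether the two $k$-ellipses are empty or overlap; moreover, if $a$ happens to lie in $E\left[ x_{1},...,x_{k};r\right] \cup E\left[ x_{1}^{\prime },...,x_{k}^{\prime };r^{\prime }\right] $, then the first line already gives $Ta=a$, so the two lines do not clash. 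By construction $E\left[ x_{1},...,x_{k};r\right] \cup E\left[ x_{1}^{\prime },...,x_{k}^{\prime };r^{\prime }\right] \subseteq Fix\left( T\right) $, hence each of $E\left[ x_{1},...,x_{k};r\right] $ and $E\left[ x_{1}^{\prime },...,x_{k}^{\prime };r^{\prime }\right] $ is contained in $Fix\left( T\right) $, that is, $T$ fixes both $k$-ellipses.

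I do not expect any genuine obstacle here: the only thing to verify is the well-definedness of the piecewise formula, which is immediate because the two cases partition $X$. In fact one may replace the constant value $a$ on the complement by any map whatsoever (for instance $Tx=x_{1}$ there), so the family of self-mappings fixing the two given $k$-ellipses is very large; the proposition merely records that it is non-empty.
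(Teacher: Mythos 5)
Your second construction is exactly the paper's proof: the map that is the identity on the union of the two $k$-ellipses and a constant elsewhere (the paper merely insists its constant $\alpha $ lie off both ellipses, a restriction you correctly observe is unnecessary). The proposal is correct and takes essentially the same approach, with the identity map noted as an even more trivial witness.
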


\begin{proof}
Let $E\left[ x_{1},...,x_{k};r\right] $ and$\ E\left[ x_{1}^{^{\prime
}},...,x_{k}^{^{\prime }};r^{^{\prime }}\right] $ be any $k$-ellipses on $X.$
Let us define the self-mapping $T:X\rightarrow X$ as 
\begin{equation*}
Tx=\left\{ 
\begin{array}{ccc}
x & , & x\in E\cup E^{^{\prime }} \\ 
\alpha & , & \text{otherwise}%
\end{array}%
,\right.
\end{equation*}%
for all $x\in X$, where $\alpha $ is a constant such that $%
\sum_{i=1}^{k}d\left( x,x_{i}\right) \neq r$ and $\sum_{i=1}^{k}d\left(
x,x_{i}^{^{\prime }}\right) \neq r^{^{\prime }}.$ It can be easily seen that 
$x\in Fix\left( T\right) $ for all $x\in E\cup E^{^{\prime }}$, that is, $T$
fixes the $k$-ellipses both $E$ and $E^{^{\prime }}.$
\end{proof}

\begin{remark}
\label{rmk1}

$1)$ Proposition \ref{prop1} generalizes Proposition $3.1$ give in \cite%
{Ozgur-malaysian} and Proposition $4$ give in \cite{Joshi}.

$2)$ Proposition \ref{prop1} can be extended as follows$:$

\ \ "Let $\left( X,d\right) $ be a metric space and $E\left[
x_{1},...,x_{k};r\right] ,..,E\left[ x_{1}^{n},...,x_{k}^{n};r^{n}\right] $
any $k$-ellipses. Then there exists at least one self-mapping $%
T:X\rightarrow X$ \ such that $T$ fixes the $k$-ellipses $E\left[
x_{1},...,x_{k};r\right] ,..,E\left[ x_{1}^{n},...,x_{k}^{n};r^{n}\right] .$"

From the above reasons, it is important to investigate the existence and
uniqueness conditions of the fixed $k$-ellipses.,
\end{remark}

Now, we give some theorems about the existence and uniqueness conditions of
the fixed $k$-ellipses.

\begin{theorem}
\label{thrm1} Let $\left( X,d\right) $ be a metric space and $E\left[
x_{1},...,x_{k};r\right] $ any $k$-ellipse on $X$. Let us define the mapping 
$\xi :X\rightarrow \left[ 0,\infty \right) $ as%
\begin{equation*}
\xi \left( x\right) =\sum_{i=1}^{k}d\left( x,x_{i}\right) ,
\end{equation*}%
for all $x\in X.$ If there exists a self-mapping $T:X\rightarrow X$ such that

$\left( E_{k}1\right) $ $d\left( x,Tx\right) \leq \xi \left( x\right) -\xi
\left( Tx\right) $ \ \ for each $x\in E\left[ x_{1},...,x_{k};r\right] ,$

$\left( E_{k}2\right) $ $\sum_{i=1}^{k}d\left( Tx,x_{i}\right) \geq r$ \ \ \
\ \ for each $x\in E\left[ x_{1},...,x_{k};r\right] ,$

$\left( E_{k}3\right) $ $d\left( Tx,Ty\right) \leq h\left[ d\left(
Tx,x\right) +d\left( Ty,y\right) \right] $ \ \ for each $x\in E\left[
x_{1},...,x_{k};r\right] ,$ $y\in X-E\left[ x_{1},...,x_{k};r\right] $ and
some $h\in \left( 0,\frac{1}{2}\right) ,$\newline
then $E\left[ x_{1},...,x_{k};r\right] $ is a unique fixed $k$-ellipse of $%
T. $
\end{theorem}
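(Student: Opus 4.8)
The plan is to split the statement into an \emph{existence} part, namely $E[x_1,\dots,x_k;r]\subseteq Fix(T)$, and a \emph{uniqueness} part, namely that $T$ fixes no other $k$-ellipse, and to observe that almost everything is delivered directly by the three hypotheses with only short arguments in between.

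First I would prove existence. Fix an arbitrary $x\in E[x_1,\dots,x_k;r]$; by Definition \ref{dfn1} this means $\xi(x)=\sum_{i=1}^{k}d(x,x_i)=r$. Hypothesis $(E_k2)$ gives $\xi(Tx)=\sum_{i=1}^{k}d(Tx,x_i)\ge r=\xi(x)$, hence $\xi(x)-\xi(Tx)\le 0$. Substituting into $(E_k1)$ yields
\[
0\le d(x,Tx)\le \xi(x)-\xi(Tx)\le 0,
\]
so $d(x,Tx)=0$, i.e. $Tx=x$. Since $x$ was an arbitrary point of the $k$-ellipse, $E[x_1,\dots,x_k;r]\subseteq Fix(T)$, so it is a fixed $k$-ellipse of $T$.

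Next I would prove uniqueness by contradiction. Suppose $E':=E[x_1',\dots,x_k';r']$ is a fixed $k$-ellipse of $T$ with $E'\ne E[x_1,\dots,x_k;r]$; then $E'$ contains some point $y$ with $y\in X-E[x_1,\dots,x_k;r]$, and $Ty=y$ because $E'\subseteq Fix(T)$. Choosing any $x\in E[x_1,\dots,x_k;r]$ (the $k$-ellipse being assumed nonempty, as otherwise the statement is vacuous), the existence part gives $Tx=x$, and then $(E_k3)$, applied to this pair $(x,y)$ with $x\in E[x_1,\dots,x_k;r]$ and $y\in X-E[x_1,\dots,x_k;r]$, gives
\[
d(x,y)=d(Tx,Ty)\le h\bigl[d(Tx,x)+d(Ty,y)\bigr]=h\cdot 0=0,
\]
so $x=y$, contradicting $y\in X-E[x_1,\dots,x_k;r]$. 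Therefore $E[x_1,\dots,x_k;r]$ is the unique fixed $k$-ellipse of $T$.

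All computations here are immediate; the step I expect to need the most care is the logical framing of uniqueness — making precise that a fixed $k$-ellipse different from $E[x_1,\dots,x_k;r]$ must contain a point of the complement $X-E[x_1,\dots,x_k;r]$ (which is exactly the domain in which $(E_k3)$ is available), together with recording the standing assumption $E[x_1,\dots,x_k;r]\neq\emptyset$ so that a reference point $x$ exists. As a byproduct I would note that the same argument shows no point of $X-E[x_1,\dots,x_k;r]$ can be fixed by $T$, so in fact $Fix(T)=E[x_1,\dots,x_k;r]$.
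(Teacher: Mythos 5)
Your proof is correct and follows essentially the same route as the paper: existence by combining $(E_k1)$ and $(E_k2)$ to force $d(x,Tx)\le r-r=0$, and uniqueness by applying $(E_k3)$ to a point of the second ellipse. In fact your uniqueness step is slightly more careful than the paper's own, which applies $(E_k3)$ to an arbitrary $y\in E[x_1',\dots,x_k';r']$ with $x\neq y$ without first verifying the hypothesis $y\in X-E[x_1,\dots,x_k;r]$ under which $(E_k3)$ is stated; your observation that a genuinely different fixed $k$-ellipse must meet that complement is exactly the point needed to make the argument precise.
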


\begin{proof}
At first, we prove the existence of a fixed $k$-ellipse of $T.$ Let $x\in E%
\left[ x_{1},...,x_{k};r\right] $ be any point. Using the conditions $\left(
E_{k}1\right) ,$ $\left( E_{k}2\right) $ and the definition of $\xi ,$ we get%
\begin{eqnarray*}
d\left( x,Tx\right) &\leq &\xi \left( x\right) -\xi \left( Tx\right) \\
&=&\sum_{i=1}^{k}d\left( x,x_{i}\right) -\sum_{i=1}^{k}d\left(
Tx,x_{i}\right) \\
&=&r-\sum_{i=1}^{k}d\left( Tx,x_{i}\right) \\
&\leq &r-r=0,
\end{eqnarray*}%
that is, $x\in Fix\left( T\right) =\left\{ x\in X:x=Tx\right\} .$ Hence, $E%
\left[ x_{1},...,x_{k};r\right] $ is a fixed $k$-ellipse of $T.$

Now, we show that $E\left[ x_{1},...,x_{k};r\right] $ is a unique fixed $k$%
-ellipse of $T.$ To do this, on the contrary, we suppose that $E\left[
x_{1}^{^{\prime }},...,x_{k}^{^{\prime }};r^{^{\prime }}\right] $ is another
fixed $k$-ellipse of $T.$ Let $x\in E\left[ x_{1},...,x_{k};r\right] $ and $%
y\in E\left[ x_{1}^{^{\prime }},...,x_{k}^{^{\prime }};r^{^{\prime }}\right] 
$ such that $x\neq y.$ Then using the condition $\left( E_{k}3\right) $, we
obtain%
\begin{equation*}
d\left( Tx,Ty\right) =d\left( x,y\right) \leq h\left[ d\left( Tx,x\right)
+d\left( Ty,y\right) \right] =h\left[ d\left( x,x\right) +d\left( y,y\right) %
\right] =0,
\end{equation*}%
a contradiction with $x\neq y.$ It should be $x=y.$

Consequently $E\left[ x_{1},...,x_{k};r\right] $ is a unique fixed $k$%
-ellipse of $T.$
\end{proof}

If we consider the above theorem, we obtain the following remark:

\begin{remark}
\label{rmk2}

$1)$ If $k=1,$ then we have $E\left[ x_{1};r\right] =C_{x_{1},r}$ and so by
Theorem $2.1$ given in \cite{Ozgur-malaysian}, $E\left[ x_{1};r\right] $ is
a fixed $1$-ellipse of $T$ or $C_{x_{1},r}$ is a fixed circle of $T.$

$2)$ If $k=2,$ then we have $E\left[ x_{1},x_{2};r\right] =E_{r}\left(
x_{1},x_{2}\right) $ and so by Theorem $1$ given in \cite{Joshi}, $E\left[
x_{1},x_{2};r\right] $ is a fixed $2$-ellipse of $T$ or $E_{r}\left(
x_{1},x_{2}\right) $ is a fixed ellipse of $T.$

$3)$ The condition $\left( E_{k}3\right) $ can be changed with a proper
contractive condition such as%
\begin{equation*}
d\left( Tx,Ty\right) \leq hd\left( x,y\right) ,
\end{equation*}%
where $h\in \left( 0,1\right) .$ This contraction can be considered as
Banach type contractive condition \cite{Banach}.

$4)$ If we pay attention, the condition $\left( E_{k}1\right) $ can be
considered as Caristi type contractive condition \cite{Caristi} and the
condition $\left( E_{k}3\right) $ can be considered as Kannan type
contractive condition \cite{Kannan}.

$5)$ The condition $\left( E_{k}1\right) $ guarantees that $Tx$ is not in
the exterior of the $k$-ellipse $E\left[ x_{1},...,x_{k};r\right] $ for each 
$x\in E\left[ x_{1},...,x_{k};r\right] $ and the condition $\left(
E_{k}2\right) $ guarantees that $Tx$ is not in the interior of the $k$%
-ellipse $E\left[ x_{1},...,x_{k};r\right] $ for each $x\in E\left[
x_{1},...,x_{k};r\right] .$ Therefore, we say $T\left( E\left[
x_{1},...,x_{k};r\right] \right) \subset E\left[ x_{1},...,x_{k};r\right] .$
\end{remark}

Now, we give the examples which satisfy the above theorem:

\begin{example}
\label{exmp1} Let $\left( X,d\right) $ be a metric space, $E=E\left[
x_{1},...,x_{k};r\right] $ any $k$-ellipse and $z$ a constant such that%
\begin{equation*}
2d\left( x,z\right) <d\left( y,z\right) ,
\end{equation*}%
for all $x\in E$ and $y\in X-E.$

Let us define the self-mapping $T:X\rightarrow X$ as%
\begin{equation*}
Tx=\left\{ 
\begin{array}{ccc}
x & , & \text{\ }x\in E \\ 
z & , & \text{\ otherwise}%
\end{array}%
,\right.
\end{equation*}%
for all $x\in X.$ Then it is clear that $T$ satisfies the conditions $\left(
E_{k}1\right) $ and $\left( E_{k}2\right) $ and so $E\left[ x_{1},...,x_{k};r%
\right] $ is a fixed $k$-ellipse of $T.$ Now we show that $T$ satisfies the
condition $\left( E_{k}3\right) .$ Let $x\in E\left[ x_{1},...,x_{k};r\right]
$ and $y\in X-E\left[ x_{1},...,x_{k};r\right] .$ Then we have 
\begin{eqnarray*}
d\left( Tx,Ty\right) &=&d\left( x,z\right) \leq h\left[ d\left( Tx,x\right)
+d\left( Ty,y\right) \right] \\
\text{ \ \ \ \ \ \ \ \ \ \ \ } &=&h\left[ d\left( x,x\right) +d\left(
z,y\right) \right] =hd\left( z,y\right) ,
\end{eqnarray*}%
with $h\in \left( 0,\frac{1}{2}\right) .$ Therefore, $E\left[
x_{1},...,x_{k};r\right] $ is a unique fixed $k$-ellipse of $T.$
\end{example}

In the following example, we see that the fixed $k$-ellipse of $T$ does not
have to be unique:

\begin{example}
\label{exmp2} Let $\left( X,d\right) $ be a metric space, $E_{1}=E\left[
x_{1},...,x_{k};r\right] ,$ $E_{2}=E\left[ x_{1}^{^{\prime
}},...,x_{k}^{^{\prime }};r^{^{\prime }}\right] $ any two $k$-ellipses and $%
z $ a constant such that%
\begin{equation*}
\sum_{i=1}^{k}d\left( x,x_{i}\right) \neq r\text{ and }\sum_{i=1}^{k}d\left(
x,x_{i}^{^{\prime }}\right) \neq r^{^{\prime }}.
\end{equation*}%
Let us define the self-mapping $T:X\rightarrow X$ as%
\begin{equation*}
Tx=\left\{ 
\begin{array}{ccc}
x & , & x\in E_{1}\cup E_{2} \\ 
z & ,\text{\ } & \text{\ otherwise}%
\end{array}%
,\right.
\end{equation*}%
for all $x\in X.$ Then $T$ satisfies the conditions $\left( E_{k}1\right) $
and $\left( E_{k}2\right) $ for both $x\in E_{1}$ and $x\in E_{2}.$ Hence $T$
fixes both $E_{1}$ and $E_{2}.$ But $T$ does not satisfy the condition $%
\left( E_{k}3\right) .$ Indeed, for $x\in E_{1}$ and $y\in E_{2}$ with $%
x\neq y,$ we have%
\begin{equation*}
d\left( Tx,Ty\right) =d\left( x,y\right) \leq h\left[ d\left( Tx,x\right)
+d\left( Ty,y\right) \right] =0,
\end{equation*}%
a contradiction with $x\neq y.$

Consequently, the fixed $k$-ellipse of $T$ is not unique.
\end{example}

\begin{example}
\label{exmp3} Let $X=%
%TCIMACRO{\U{211d} }%
%BeginExpansion
\mathbb{R}
%EndExpansion
$ be the usual metric space with the usual metric $d$ defined as%
\begin{equation*}
d\left( x,y\right) =\left\vert x-y\right\vert ,
\end{equation*}%
$x,y\in 
%TCIMACRO{\U{211d} }%
%BeginExpansion
\mathbb{R}
%EndExpansion
.$ Let us take a $3$-ellipse $E\left[ -1,0,1;9\right] =\left\{ x\in 
%TCIMACRO{\U{211d} }%
%BeginExpansion
\mathbb{R}
%EndExpansion
:d\left( x,-1\right) +d\left( x,0\right) +d\left( x,1\right) =9\right\} $
and define the self-mapping $T:$ $%
%TCIMACRO{\U{211d} }%
%BeginExpansion
\mathbb{R}
%EndExpansion
\rightarrow 
%TCIMACRO{\U{211d} }%
%BeginExpansion
\mathbb{R}
%EndExpansion
$ as%
\begin{equation*}
Tx=\left\{ 
\begin{array}{ccc}
0 & , & x\in \left\{ -3,3\right\} \\ 
-3 & , & x\in 
%TCIMACRO{\U{211d} }%
%BeginExpansion
\mathbb{R}
%EndExpansion
-\left\{ -3,3\right\}%
\end{array}%
\right. ,
\end{equation*}%
for all $x\in 
%TCIMACRO{\U{211d} }%
%BeginExpansion
\mathbb{R}
%EndExpansion
.$ Then $T$ satisfies the condition $\left( E_{k}1\right) $ but does not
satisfy the condition $\left( E_{k}2\right) .$ Therefore, $E\left[ -1,0,1;9%
\right] $ is not a fixed $3$-ellipse of $T.$

On the other hand, let us define the self-mapping $S:$ $%
%TCIMACRO{\U{211d} }%
%BeginExpansion
\mathbb{R}
%EndExpansion
\rightarrow 
%TCIMACRO{\U{211d} }%
%BeginExpansion
\mathbb{R}
%EndExpansion
$ as%
\begin{equation*}
Sx=\left\{ 
\begin{array}{ccc}
5 & , & x\in \left\{ -3,3\right\} \\ 
0 & , & x\in 
%TCIMACRO{\U{211d} }%
%BeginExpansion
\mathbb{R}
%EndExpansion
-\left\{ -3,3\right\}%
\end{array}%
,\right.
\end{equation*}%
for all $x\in 
%TCIMACRO{\U{211d} }%
%BeginExpansion
\mathbb{R}
%EndExpansion
.$ Then $S$ satisfies the condition $\left( E_{k}2\right) $ but does not
satisfy the condition $\left( E_{k}1\right) .$ Hence $E\left[ -1,0,1;9\right]
$ is not a fixed $3$-ellipse of $S.$
\end{example}

\begin{theorem}
\label{thrm2} Let $\left( X,d\right) $ be a metric space and $E\left[
x_{1},...,x_{k};r\right] $ any $k$-ellipse on $X$ and the mapping $\xi
:X\rightarrow \left[ 0,\infty \right) $ defined as in Theorem \ref{thrm1}.
If there exists a self-mapping $T:X\rightarrow X$ such that

$\left( E_{k}^{^{\prime }}1\right) $ $d\left( x,Tx\right) \leq \xi \left(
x\right) +\xi \left( Tx\right) -2r$ \ \ for each $x\in E\left[
x_{1},...,x_{k};r\right] ,$

$\left( E_{k}^{^{\prime }}2\right) $ $\sum_{i=1}^{k}d\left( Tx,x_{i}\right)
\leq r$ \ \ \ \ \ for each $x\in E\left[ x_{1},...,x_{k};r\right] ,$

$\left( E_{k}^{^{\prime }}3\right) $ $d\left( Tx,Ty\right) \leq h\left[
d\left( Tx,y\right) +d\left( Ty,x\right) \right] $ \ \ for each $x\in E\left[
x_{1},...,x_{k};r\right] ,$ $y\in X-E\left[ x_{1},...,x_{k};r\right] $ and
some $h\in \left[ 0,\frac{1}{2}\right) ,$\newline
then $E\left[ x_{1},...,x_{k};r\right] $ is a unique fixed $k$-ellipse of $%
T. $
\end{theorem}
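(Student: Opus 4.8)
The plan is to mimic the structure of the proof of Theorem~\ref{thrm1}, replacing each ingredient by its ``Chatterjea-flavoured'' analogue. First I would establish existence: fix an arbitrary $x\in E\left[x_{1},\dots,x_{k};r\right]$, so that $\xi(x)=r$. Combining $\left(E_{k}^{\prime}1\right)$ with the definition of $\xi$ gives
\begin{equation*}
d\left(x,Tx\right)\leq \xi(x)+\xi(Tx)-2r=r+\sum_{i=1}^{k}d\left(Tx,x_{i}\right)-2r=\sum_{i=1}^{k}d\left(Tx,x_{i}\right)-r,
\end{equation*}
and then $\left(E_{k}^{\prime}2\right)$ forces the right-hand side to be $\leq r-r=0$. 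Since $d$ is nonnegative, $d\left(x,Tx\right)=0$, i.e.\ $x\in Fix(T)$. As $x$ was arbitrary, $E\left[x_{1},\dots,x_{k};r\right]\subset Fix(T)$, which is exactly the assertion that $E\left[x_{1},\dots,x_{k};r\right]$ is a fixed $k$-ellipse of $T$.

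Next I would prove uniqueness by contradiction, exactly paralleling Theorem~\ref{thrm1}. Suppose $E\left[x_{1}^{\prime},\dots,x_{k}^{\prime};r^{\prime}\right]$ is another fixed $k$-ellipse of $T$, and pick $x\in E\left[x_{1},\dots,x_{k};r\right]$ and $y\in E\left[x_{1}^{\prime},\dots,x_{k}^{\prime};r^{\prime}\right]$ with $x\neq y$. One subtlety to address first: the contractive hypothesis $\left(E_{k}^{\prime}3\right)$ is stated for $x\in E\left[x_{1},\dots,x_{k};r\right]$ and $y\in X-E\left[x_{1},\dots,x_{k};r\right]$, so I must note that without loss of generality $y\notin E\left[x_{1},\dots,x_{k};r\right]$ — indeed if $y$ lay in both ellipses we could instead swap the roles, or simply observe that if $E\left[x_{1}^{\prime},\dots\right]\subset E\left[x_{1},\dots\right]$ there is nothing to compare, so we may assume there is a point $y$ of the second ellipse outside the first. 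Since both $x$ and $y$ are fixed points, $Tx=x$ and $Ty=y$, hence from $\left(E_{k}^{\prime}3\right)$,
\begin{equation*}
d\left(x,y\right)=d\left(Tx,Ty\right)\leq h\left[d\left(Tx,y\right)+d\left(Ty,x\right)\right]=h\left[d\left(x,y\right)+d\left(y,x\right)\right]=2h\,d\left(x,y\right).
\end{equation*}
Because $h\in\left[0,\frac12\right)$ we have $2h<1$, so $d\left(x,y\right)\leq 2h\,d\left(x,y\right)$ is possible only when $d\left(x,y\right)=0$, i.e.\ $x=y$, contradicting $x\neq y$. Therefore the fixed $k$-ellipse is unique.

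I expect the only genuine obstacle — really a point of care rather than difficulty — to be the bookkeeping in the uniqueness step: making sure the two ellipses cannot coincide on the relevant points and that the Chatterjea constant range $\left[0,\frac12\right)$ (note the closed left endpoint, unlike Theorem~\ref{thrm1}) still yields $2h<1$. The existence half is a one-line substitution once one notices that $\left(E_{k}^{\prime}1\right)$ and $\left(E_{k}^{\prime}2\right)$ are designed precisely so that $d\left(x,Tx\right)$ is squeezed to $0$; conditions $\left(E_{k}^{\prime}1\right)$ and $\left(E_{k}^{\prime}2\right)$ here play the geometric role (as in Remark~\ref{rmk2}, item~$5$) of keeping $Tx$ out of the exterior and the interior of the $k$-ellipse respectively, so that $T\left(E\left[x_{1},\dots,x_{k};r\right]\right)\subset E\left[x_{1},\dots,x_{k};r\right]$ and in fact each such point is fixed. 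I would close by remarking, in the spirit of Remark~\ref{rmk2}, that $\left(E_{k}^{\prime}3\right)$ is a Chatterjea type contractive condition and could be replaced by a Banach type one.
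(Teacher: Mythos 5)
Your proof is correct and follows essentially the same route as the paper: existence by combining $\left(E_{k}^{^{\prime }}1\right)$ and $\left(E_{k}^{^{\prime }}2\right)$ to squeeze $d\left(x,Tx\right)\leq 0$, and uniqueness by contradiction from $\left(E_{k}^{^{\prime }}3\right)$ using $2h<1$. Your extra remark about ensuring $y\notin E\left[x_{1},\dots,x_{k};r\right]$ before invoking $\left(E_{k}^{^{\prime }}3\right)$ is a point of care the paper's own proof silently skips, but it does not change the argument.
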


\begin{proof}
At first, we prove the existence of a fixed $k$-ellipse of $T.$ Let $x\in E%
\left[ x_{1},...,x_{k};r\right] $ be any point. Using the conditions $\left(
E_{k}^{^{\prime }}1\right) ,$ $\left( E_{k}^{^{\prime }}2\right) $ and the
definition of $\xi $, we get%
\begin{eqnarray*}
d\left( x,Tx\right) &\leq &\xi \left( x\right) +\xi \left( Tx\right) -2r \\
&=&\sum_{i=1}^{k}d\left( x,x_{i}\right) +\sum_{i=1}^{k}d\left(
Tx,x_{i}\right) -2r \\
&=&r+\sum_{i=1}^{k}d\left( Tx,x_{i}\right) -2r \\
&=&\sum_{i=1}^{k}d\left( Tx,x_{i}\right) -r\leq r-r=0,
\end{eqnarray*}%
that is, $x\in Fix(T).$ So $E\left[ x_{1},...,x_{k};r\right] $ is a fixed $k$%
-ellipse of $T.$

Now, we show that $E\left[ x_{1},...,x_{k};r\right] $ is a unique fixed $k$%
-ellipse of $T.$ To do this, on the contrary, we assume that $E\left[
x_{1}^{^{\prime }},...,x_{k}^{^{\prime }};r^{^{\prime }}\right] $ is another
fixed $k$-ellipse of $T.$ Let $x\in E\left[ x_{1},...,x_{k};r\right] $ and $%
y\in E\left[ x_{1}^{^{\prime }},...,x_{k}^{^{\prime }};r^{^{\prime }}\right] 
$ such that $x\neq y.$ Then using the condition $\left( E_{k}^{^{\prime
}}3\right) ,$ we find%
\begin{equation*}
d\left( Tx,Ty\right) =d\left( x,y\right) \leq h\left[ d\left( Tx,y\right)
+d\left( Ty,x\right) \right] =h\left[ d\left( x,y\right) +d\left( y,x\right) %
\right] =2hd\left( x,y\right) ,
\end{equation*}%
a contradiction with $h\in \left[ 0,\frac{1}{2}\right) .$ It should be $x=y.$
Consequently, $E\left[ x_{1},...,x_{k};r\right] $ is a unique fixed $k$%
-ellipse of $T.$
\end{proof}

\begin{remark}
\label{rmk3}

$1)$ If $k=1,$ then we have $E\left[ x_{1};r\right] =C_{x_{1},r}$ and so by
Theorem $2.2$ given in \cite{Ozgur-malaysian}, $E\left[ x_{1};r\right] $ is
a fixed $1$-ellipse of $T$ or $C_{x_{1},r}$ is a fixed circle of $T.$

$2)$ If $k=2,$ then we consider Theorem \ref{thrm2} as a fixed-ellipse
theorem.

$3)$ The determination of the condition $\left( E_{k}^{^{\prime }}3\right) $
is not unique.

$4)$ If we pay close attention, the condition $\left( E_{k}^{^{\prime
}}3\right) $ can be considered as a Chatterjea type contractive condition 
\cite{Chatterjea}.

$5)$ The condition $\left( E_{k}^{^{\prime }}1\right) $ guarantees that $Tx$
is not in the interior of the $k$-ellipse $E\left[ x_{1},...,x_{k};r\right] $
for each $x\in E\left[ x_{1},...,x_{k};r\right] $ and the condition $\left(
E_{k}^{^{\prime }}2\right) $ guarantees that $Tx$ is not in the exterior of
the $k$-ellipse $E\left[ x_{1},...,x_{k};r\right] $ for each $x\in E\left[
x_{1},...,x_{k};r\right] .$ Hence, we say $T\left( E\left[ x_{1},...,x_{k};r%
\right] \right) \subset E\left[ x_{1},...,x_{k};r\right] .$
\end{remark}

\begin{example}
\label{exmp4} Let $X=\left\{ -4,-1,0,1,2,18\right\} $ be the usual metric
space. Let us take a $4$-ellipse $E\left[ -1,0,1,2;18\right] $ such as%
\begin{eqnarray*}
E\left[ -1,0,1,2;18\right] &=&\left\{ x\in X:\left\vert x+1\right\vert
+\left\vert x\right\vert +\left\vert x-1\right\vert +\left\vert
x-2\right\vert =18\right\} \\
&=&\left\{ -4\right\} .
\end{eqnarray*}%
Let us define the self-mapping $T:X\rightarrow X$ as%
\begin{equation*}
Tx=\left\{ 
\begin{array}{ccc}
-4 & , & x\in X-\left\{ -1\right\} \\ 
\text{\ }0 & , & x=-1%
\end{array}%
,\right.
\end{equation*}%
for all $x\in X.$ Then $T$ satisfies the conditions $\left( E_{k}^{^{\prime
}}1\right) $ and $\left( E_{k}^{^{\prime }}2\right) $ and so $E\left[
-1,0,1,2;18\right] $ is a fixed $4$-ellipse of $T.$ Also $T$ satisfies the
condition $\left( E_{k}^{^{\prime }}3\right) $ with $h=\frac{4}{9}.$
Consequently, $E\left[ -1,0,1,2;18\right] $ is a unique fixed $4$-ellipse of 
$T.$
\end{example}

\begin{theorem}
\label{thrm3} Let $\left( X,d\right) $ be a metric space and $E\left[
x_{1},...,x_{k};r\right] $ any $k$-ellipse on $X$ and the mapping $\xi
:X\rightarrow \left[ 0,\infty \right) $ defined as in Theorem \ref{thrm1}.
If there exists a self-mapping $T:X\rightarrow X$ satisfying the conditions $%
\left( E_{k}1\right) $, $\left( E_{k}3\right) $ and

$\left( E_{k}^{^{\prime \prime }}2\right) $ $\mu d\left( x,Tx\right) +$ $%
\sum_{i=1}^{k}d\left( Tx,x_{i}\right) \geq r$ for each $x\in E\left[
x_{1},...,x_{k};r\right] $ and some $\mu \in \left[ 0,1\right) ,$\newline
then $E\left[ x_{1},...,x_{k};r\right] $ is a unique fixed $k$-ellipse of $%
T. $
\end{theorem}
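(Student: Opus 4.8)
The plan is to mirror the structure of the proof of Theorem~\ref{thrm1}: first show that every point of $E\left[x_{1},\ldots,x_{k};r\right]$ is fixed by $T$ (existence), and then use condition $\left(E_{k}3\right)$ exactly as before to exclude a second fixed $k$-ellipse (uniqueness). The only genuinely new ingredient is handling the slack term $\mu d(x,Tx)$ that appears in $\left(E_{k}^{\prime\prime}2\right)$ in place of the cleaner inequality $\left(E_{k}2\right)$.

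For existence, fix an arbitrary $x\in E\left[x_{1},\ldots,x_{k};r\right]$. Starting from $\left(E_{k}1\right)$ and using $\xi(x)=r$, I would write
\begin{equation*}
d(x,Tx)\leq \xi(x)-\xi(Tx)=r-\sum_{i=1}^{k}d(Tx,x_{i}).
\end{equation*}
Now I rewrite $\left(E_{k}^{\prime\prime}2\right)$ as $\sum_{i=1}^{k}d(Tx,x_{i})\geq r-\mu d(x,Tx)$ and substitute it into the line above, obtaining
\begin{equation*}
d(x,Tx)\leq r-\bigl(r-\mu d(x,Tx)\bigr)=\mu d(x,Tx),
\end{equation*}
hence $(1-\mu)d(x,Tx)\leq 0$. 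Since $\mu\in\left[0,1\right)$ we have $1-\mu>0$, so $d(x,Tx)=0$, i.e.\ $x=Tx$. Thus $E\left[x_{1},\ldots,x_{k};r\right]\subset Fix(T)$, which means $E\left[x_{1},\ldots,x_{k};r\right]$ is a fixed $k$-ellipse of $T$. (One can also observe, as in Remark~\ref{rmk2}, that $\left(E_{k}^{\prime\prime}2\right)$ keeps $Tx$ off the strict interior of the $k$-ellipse, so the geometric picture $T\left(E\left[x_{1},\ldots,x_{k};r\right]\right)\subset E\left[x_{1},\ldots,x_{k};r\right]$ still holds once points are known to be fixed.)

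For uniqueness, I would argue by contradiction exactly as in Theorem~\ref{thrm1}: assume $E\left[x_{1}^{\prime},\ldots,x_{k}^{\prime};r^{\prime}\right]$ is another fixed $k$-ellipse of $T$ and pick $x\in E\left[x_{1},\ldots,x_{k};r\right]$ and $y\in E\left[x_{1}^{\prime},\ldots,x_{k}^{\prime};r^{\prime}\right]$ with $x\neq y$ and $y\in X-E\left[x_{1},\ldots,x_{k};r\right]$. Applying $\left(E_{k}3\right)$ together with $Tx=x$ and $Ty=y$ gives
\begin{equation*}
d(x,y)=d(Tx,Ty)\leq h\bigl[d(Tx,x)+d(Ty,y)\bigr]=h\bigl[d(x,x)+d(y,y)\bigr]=0,
\end{equation*}
contradicting $x\neq y$; hence $x=y$ and the fixed $k$-ellipse is unique.

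I do not expect a serious obstacle. The one step that needs a bit of care is the chain of inequalities in the existence part: the bound coming from $\left(E_{k}^{\prime\prime}2\right)$ must be inserted with the correct sign, and it is precisely the strictness $\mu<1$ that upgrades $(1-\mu)d(x,Tx)\leq 0$ to $d(x,Tx)=0$. The uniqueness half is formally identical to that of Theorem~\ref{thrm1}, since $\left(E_{k}3\right)$ is unchanged.
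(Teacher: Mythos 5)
Your proof is correct and follows exactly the route the paper intends: the published proof of this theorem is only the one-line remark that it goes ``as in Theorem \ref{thrm1},'' and your argument is precisely that adaptation, with the new term handled correctly via $d(x,Tx)\leq \mu d(x,Tx)$ and $\mu<1$. The uniqueness half is identical to the paper's Theorem \ref{thrm1} argument, so there is nothing further to add.
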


\begin{proof}
Using the similar approaches given in the proof of Theorem \ref{thrm1}, we
can easily prove that $E\left[ x_{1},...,x_{k};r\right] $ is a unique fixed $%
k$-ellipse of $T.$
\end{proof}

\begin{remark}
\label{rmk4}

$1)$ If $k=1,$ then by Theorem $2.3$ given in \cite{Ozgur-malaysian}, $E%
\left[ x_{1};r\right] $ is a fixed $1$-ellipse of $T$ or $C_{x_{1},r}$ is a
fixed circle of $T.$

$2)$ If $k=2,$ then we consider Theorem $2$ given in \cite{Joshi}, $E\left[
x_{1},x_{2};r\right] $ as a fixed $2$-ellipse of $T$ or $E_{r}\left(
x_{1},x_{2}\right) $ is a fixed ellipse of $T.$

$3)$ The condition $\left( E_{k}^{^{^{\prime \prime }}}2\right) $ implies
that $Tx$ can be lies on or exterior or interior of the $k$-ellipse $E\left[
x_{1},...,x_{k};r\right] $.
\end{remark}

\begin{remark}
\label{rmk5} If we consider Example \ref{exmp2}, then $T$ satisfies the
conditions $\left( E_{k}^{^{\prime }}1\right) $ and $\left( E_{k}^{^{\prime
}}2\right) $ but $T$ does not satisfy the condition $\left( E_{k}^{^{\prime
}}3\right) .$ Also, if we consider Example \ref{exmp3}, then $T$ satisfies
the condition $\left( E_{k}^{^{\prime }}2\right) $ but $T$ does not satisfy
the condition $\left( E_{k}^{^{\prime }}1\right) .$ Similarly, in the same
example, if we define the self-mapping $H:$ $%
%TCIMACRO{\U{211d} }%
%BeginExpansion
\mathbb{R}
%EndExpansion
\rightarrow 
%TCIMACRO{\U{211d} }%
%BeginExpansion
\mathbb{R}
%EndExpansion
$ as%
\begin{equation*}
Hx=\left\{ 
\begin{array}{ccc}
10 & , & \text{\ }x\in \left\{ -3,3\right\} \\ 
0 & \text{\ }, & \text{\ \ }x\in 
%TCIMACRO{\U{211d} }%
%BeginExpansion
\mathbb{R}
%EndExpansion
-\left\{ -3,3\right\}%
\end{array}%
,\right.
\end{equation*}%
for all $x\in 
%TCIMACRO{\U{211d} }%
%BeginExpansion
\mathbb{R}
%EndExpansion
$. Then $H$ satisfies the condition $\left( E_{k}^{^{\prime }}1\right) $ but
does not satisfy the condition $\left( E_{k}^{^{\prime }}2\right) $ .

Finally, if we consider \ Example \ref{exmp1} and Example \ref{exmp2}, then $%
T$ satisfies the conditions of $\left( E_{k}^{^{^{\prime \prime }}}2\right)
. $
\end{remark}

The selection of the auxiliary function is not unique. For example, we give
the following fixed $k$-ellipse theorem:

\begin{theorem}
\label{thrm4} Let $\left( X,d\right) $ be a metric space and $E\left[
x_{1},...,x_{k};r\right] $ any $k$-ellipse on $X$ . Let us define the
mapping $\psi :%
%TCIMACRO{\U{211d} }%
%BeginExpansion
\mathbb{R}
%EndExpansion
^{+}\cup \left\{ 0\right\} \rightarrow 
%TCIMACRO{\U{211d} }%
%BeginExpansion
\mathbb{R}
%EndExpansion
$ as%
\begin{equation*}
\psi \left( x\right) =\left\{ 
\begin{array}{ccc}
x-r & ,\text{ } & \text{\ }x>0 \\ 
0 & , & x=0%
\end{array}%
\right.
\end{equation*}%
for all $x\in 
%TCIMACRO{\U{211d} }%
%BeginExpansion
\mathbb{R}
%EndExpansion
^{+}\cup \left\{ 0\right\} $. If there exists a self-mapping $T:X\rightarrow
X$ satisfying

$\left( E_{k}^{^{\prime \prime \prime }}1\right) $ $\sum_{i=1}^{k}d\left(
Tx,x_{i}\right) =r$ for each $x\in E\left[ x_{1},...,x_{k};r\right] ,$

$\left( E_{k}^{^{\prime \prime \prime }}2\right) $ $d\left( Tx,Ty\right) >r$
for each $x,y\in E\left[ x_{1},...,x_{k};r\right] $ with $x\neq y,$

$\left( E_{k}^{^{\prime \prime \prime }}3\right) $ $d\left( Tx,Ty\right)
\leq d\left( x,y\right) -\psi \left( d\left( x,Tx\right) \right) $ for each $%
x,y\in E\left[ x_{1},...,x_{k};r\right] ,$

$\left( E_{k}^{^{\prime \prime \prime }}4\right) $ $d\left( Tx,Ty\right)
\leq h\max \left\{ d\left( x,Tx\right) ,d\left( y,Ty\right) ,d\left(
x,Ty\right) ,d\left( y,Tx\right) ,d\left( x,y\right) \right\} $ for each $%
x\in E\left[ x_{1},...,x_{k};r\right] ,$ $y\in X-E\left[ x_{1},...,x_{k};r%
\right] $ and some $h\in \left( 0,1\right) ,$\newline
then $E\left[ x_{1},...,x_{k};r\right] $ is a unique fixed $k$-ellipse of $%
T. $
\end{theorem}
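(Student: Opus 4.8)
The plan is to follow the two-step pattern already used in the proofs of Theorems~\ref{thrm1} and~\ref{thrm2}: first show that every point of $E\left[x_{1},\dots,x_{k};r\right]$ is a fixed point of $T$ (existence), and then rule out a second, different fixed $k$-ellipse (uniqueness). The genuinely new feature here is that the three conditions $\left(E_{k}^{\prime\prime\prime}1\right)$, $\left(E_{k}^{\prime\prime\prime}2\right)$, $\left(E_{k}^{\prime\prime\prime}3\right)$ must be used \emph{jointly} on the $k$-ellipse, rather than as separate ``no exterior image'' / ``no interior image'' restrictions; uniqueness is then handled by the \'{C}iri\'{c}-type quasi-contraction $\left(E_{k}^{\prime\prime\prime}4\right)$ exactly as in Theorem~\ref{thrm1}.

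For existence, fix an arbitrary $x\in E\left[x_{1},\dots,x_{k};r\right]$. By $\left(E_{k}^{\prime\prime\prime}1\right)$ we have $\sum_{i=1}^{k}d\left(Tx,x_{i}\right)=r$, that is, $Tx\in E\left[x_{1},\dots,x_{k};r\right]$ as well, so both $x$ and $Tx$ lie on the $k$-ellipse and conditions $\left(E_{k}^{\prime\prime\prime}2\right)$ and $\left(E_{k}^{\prime\prime\prime}3\right)$ may be applied to this pair. Suppose, towards a contradiction, that $Tx\neq x$. Then $\left(E_{k}^{\prime\prime\prime}2\right)$, applied to the two distinct points $x,Tx\in E\left[x_{1},\dots,x_{k};r\right]$, gives $d\left(Tx,T\left(Tx\right)\right)>r$; on the other hand $\left(E_{k}^{\prime\prime\prime}3\right)$ with $y=Tx$, together with $d\left(x,Tx\right)>0$ so that $\psi\left(d\left(x,Tx\right)\right)=d\left(x,Tx\right)-r$, gives
\[
d\left(Tx,T\left(Tx\right)\right)\leq d\left(x,Tx\right)-\psi\left(d\left(x,Tx\right)\right)=d\left(x,Tx\right)-\left(d\left(x,Tx\right)-r\right)=r,
\]
a contradiction. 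Hence $Tx=x$, and since $x$ was arbitrary, $E\left[x_{1},\dots,x_{k};r\right]$ is a fixed $k$-ellipse of $T$.

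For uniqueness, suppose $E\left[x_{1}^{\prime},\dots,x_{k}^{\prime};r^{\prime}\right]$ is another fixed $k$-ellipse of $T$, and (as in the proof of Theorem~\ref{thrm1}) pick $x\in E\left[x_{1},\dots,x_{k};r\right]$ and $y\in E\left[x_{1}^{\prime},\dots,x_{k}^{\prime};r^{\prime}\right]$ with $x\neq y$, so that $y\in X-E\left[x_{1},\dots,x_{k};r\right]$ and $\left(E_{k}^{\prime\prime\prime}4\right)$ is applicable. By the existence part $Tx=x$, and since $y$ lies on a fixed $k$-ellipse, $Ty=y$. Then $\left(E_{k}^{\prime\prime\prime}4\right)$ yields
\[
d\left(x,y\right)=d\left(Tx,Ty\right)\leq h\max\left\{d\left(x,x\right),d\left(y,y\right),d\left(x,y\right),d\left(y,x\right),d\left(x,y\right)\right\}=h\,d\left(x,y\right),
\]
which is impossible because $h\in\left(0,1\right)$ and $d\left(x,y\right)>0$. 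Therefore $x=y$, contradicting $x\neq y$, and the fixed $k$-ellipse is unique.

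The step I expect to be the main obstacle is the existence argument: the trick is to feed the pair $\left(x,Tx\right)$ into $\left(E_{k}^{\prime\prime\prime}2\right)$ — which is legitimate precisely because $\left(E_{k}^{\prime\prime\prime}1\right)$ forces $Tx$ back onto the $k$-ellipse — and then observe that the resulting lower bound $d\left(Tx,T\left(Tx\right)\right)>r$ collides with the upper bound $r$ obtained from $\left(E_{k}^{\prime\prime\prime}3\right)$ together with the explicit (discontinuous) form of $\psi$. Note that using $\left(E_{k}^{\prime\prime\prime}3\right)$ with $y=x$ alone would only give $\psi\left(d\left(x,Tx\right)\right)\leq 0$, i.e.\ $d\left(x,Tx\right)\leq r$, which is not enough to conclude $Tx=x$; the detour through $T\left(Tx\right)$ is what closes the gap. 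Everything else — the max computation in the uniqueness step and the verification that the hypotheses are the natural special cases of Theorems~\ref{thrm1} and~\ref{thrm2} when $k=1,2$ — is routine.
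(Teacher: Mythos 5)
Your proposal is correct and follows essentially the same argument as the paper: existence via the contradiction between the lower bound $d\left(Tx,T^{2}x\right)>r$ from $\left(E_{k}^{\prime\prime\prime}2\right)$ (legitimate since $\left(E_{k}^{\prime\prime\prime}1\right)$ puts $Tx$ back on the $k$-ellipse) and the upper bound $d\left(Tx,T^{2}x\right)\leq r$ from $\left(E_{k}^{\prime\prime\prime}3\right)$ with $y=Tx$, and uniqueness via the max computation in $\left(E_{k}^{\prime\prime\prime}4\right)$. Your closing remark correctly identifies why the detour through $T^{2}x$ is the essential step.
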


\begin{proof}
To show the existence of a fixed $k$-ellipse of $T,$ we assume that $x\in E%
\left[ x_{1},...,x_{k};r\right] $ is any point. By the condition $\left(
E_{k}^{^{\prime \prime \prime }}1\right) $ ,we say that $Tx\in E\left[
x_{1},...,x_{k};r\right] $ for each $x\in E\left[ x_{1},...,x_{k};r\right] .$
Now we prove $x\in Fix\left( T\right) .$ On the contrary, let $x\notin
Fix\left( T\right) ,$ that is, $x\neq Tx.$ Using the condition $\left(
E_{k}^{^{\prime \prime \prime }}2\right) ,$ we get%
\begin{equation}
d\left( Tx,T^{2}x\right) >r  \label{metrik1}
\end{equation}%
and using the condition $\left( E_{k}^{^{\prime \prime \prime }}3\right) $,
we obtain%
\begin{eqnarray*}
d\left( Tx,T^{2}x\right)  &\leq &d\left( x,Tx\right) -\psi \left( d\left(
x,Tx\right) \right)  \\
&=&d\left( x,Tx\right) -d\left( x,Tx\right) +r=r,
\end{eqnarray*}%
a contradiction with the inequality (\ref{metrik1}). Thereby, it should be $%
x\in Fix\left( T\right) $ and $E\left[ x_{1},...,x_{k};r\right] $ is a fixed 
$k$-ellipse of $T.$

Finally, we prove the uniqueness of the fixed $k$-ellipse $E\left[
x_{1},...,x_{k};r\right] $ of $T.$ On the contrary, let $E\left[
x_{1}^{^{\prime }},...,x_{k}^{^{\prime }};r^{^{\prime }}\right] $ be another
fixed $k$-ellipse of $T,$ $x\in E\left[ x_{1},...,x_{k};r\right] $ and $y\in
E\left[ x_{1}^{^{\prime }},...,x_{k}^{^{\prime }};r^{^{\prime }}\right] $
such that $x\neq y.$ Using the condition $\left( E_{k}^{^{\prime \prime
\prime }}4\right) ,$ we find%
\begin{eqnarray*}
d\left( Tx,Ty\right) &\leq &h\max \left\{ d\left( x,Tx\right) ,d\left(
y,Ty\right) ,d\left( x,Ty\right) ,d\left( y,Tx\right) ,d\left( x,y\right)
\right\} \\
d\left( x,y\right) &=&h\max \left\{ 0,0,d\left( x,y\right) ,d\left(
y,x\right) ,d\left( x,y\right) \right\} \\
&=&hd\left( x,y\right) ,
\end{eqnarray*}%
a contradiction with $h\in \left( 0,1\right) .$ It should be $x=y.$
Consequently, $E\left[ x_{1},...,x_{k};r\right] $ is a unique fixed $k$%
-ellipse of $T.$
\end{proof}

\begin{remark}
\label{rmk6}

$1)$ If $k=1,$ then by Theorem $3$ given in \cite{Ozgur-Aip}, $E\left[
x_{1};r\right] $ is a fixed $1$-ellipse of $T$ or $C_{x_{1},r}$ is a fixed
circle of $T.$

$2)$ If $k=2,$ then we consider Theorem $4$ as a new fixed-ellipse result.

$3)$ The condition $\left( E_{k}^{^{\prime \prime \prime }}4\right) $ can be
considered as \'{C}iri\'{c} type contractive condition \cite{Ciric} and the
selection is not unique.

$4)$ The converse statements of Theorem \ref{thrm1}, Theorem \ref{thrm2} and
Theorem \ref{thrm3} are also true but the converse statement of Theorem \ref%
{thrm4} is not always true.
\end{remark}

\begin{example}
\label{exmp5} Let $X=\left\{ -1,0,1,4,12\right\} $ be the usual metric
space. Let us take a $3$-ellipse $E\left[ -1,0,1;12\right] $ such as%
\begin{equation*}
E\left[ -1,0,1;12\right] =\left\{ x\in 
%TCIMACRO{\U{211d} }%
%BeginExpansion
\mathbb{R}
%EndExpansion
:\left\vert x+1\right\vert +\left\vert x\right\vert +\left\vert
x-1\right\vert =12\right\} =\left\{ 4\right\} .
\end{equation*}%
Let us define the self-mapping $T:X\rightarrow X$ as%
\begin{equation*}
Tx=4,
\end{equation*}%
for all $x\in X.$ Then satisfies the conditions $\left( E_{k}^{^{\prime
\prime \prime }}1\right) ,$ $\left( E_{k}^{^{\prime \prime \prime }}2\right) 
$ and $\left( E_{k}^{^{\prime \prime \prime }}3\right) .$ So $E\left[
-1,0,1;12\right] $ is a fixed $3$-ellipse of $T.$ Also $T$ satisfy the
condition $\left( E_{k}^{^{\prime \prime \prime }}4\right) $ and we say that 
$E\left[ -1,0,1;12\right] $ is a unique fixed $3$-ellipse of $T.$
\end{example}

\begin{example}
\label{exmp6} Let $X=\left\{ -2.-1\right\} \cup \left[ 0,\infty \right) $
and $\left( X,d\right) $ be a usual metric space. Let us take a $3$-ellipse $%
E\left[ -2,0,2;21\right] $ such as%
\begin{equation*}
E\left[ -2,0,2;21\right] =\left\{ x\in 
%TCIMACRO{\U{211d} }%
%BeginExpansion
\mathbb{R}
%EndExpansion
:\left\vert x+2\right\vert +\left\vert x\right\vert +\left\vert
x-2\right\vert =21\right\} =\left\{ 7\right\} .
\end{equation*}%
Let us define the self-mapping $T:X\rightarrow X$ as%
\begin{equation*}
Tx=\left\{ 
\begin{array}{ccc}
x & , & x\in \left[ 0,\infty \right) \\ 
0 & , & x\in \left\{ -2,-1\right\}%
\end{array}%
,\right.
\end{equation*}%
for all $x\in X$. Then $T$ satisfies the conditions $\left( E_{k}^{^{\prime
\prime \prime }}1\right) ,$ $\left( E_{k}^{^{\prime \prime \prime }}2\right) 
$ and $\left( E_{k}^{^{\prime \prime \prime }}3\right) .$ Hence $E\left[
-2,0,2;21\right] $ is a fixed $3$-ellipse of $T.$ But $T$ does not satisfy
the condition $\left( E_{k}^{^{\prime \prime \prime }}4\right) $ for $x\in %
\left[ 0,\infty \right) -\left\{ 7\right\} .$ Indeed, we have%
\begin{eqnarray*}
d\left( 7,x\right) &\leq &h\max \left\{ 0,0,d\left( 7,x\right) ,d\left(
x,7\right) ,d\left( x,7\right) \right\} \\
&=&hd\left( 7,x\right) ,
\end{eqnarray*}%
a contradiction with $h\in \left( 0,1\right) $. So $E\left[ -2,0,2;21\right] 
$ is not a unique fixed $3$-ellipse of $T.$ For example, $E\left[ -1,0,1;21%
\right] $ is another fixed $3$-ellipse of $T.$
\end{example}

\begin{example}
\label{exmp7} Let $X=%
%TCIMACRO{\U{211d} }%
%BeginExpansion
\mathbb{R}
%EndExpansion
$ be the usual metric space. Let us take a $3$-ellipse $E\left[ -1,0,1;27%
\right] $ such as%
\begin{equation*}
E\left[ -2,0,2;27\right] =\left\{ x\in 
%TCIMACRO{\U{211d} }%
%BeginExpansion
\mathbb{R}
%EndExpansion
:\left\vert x+2\right\vert +\left\vert x\right\vert +\left\vert
x-2\right\vert =27\right\} =\left\{ -9,9\right\} .
\end{equation*}%
Let us define the self-mapping $T:%
%TCIMACRO{\U{211d} }%
%BeginExpansion
\mathbb{R}
%EndExpansion
\rightarrow 
%TCIMACRO{\U{211d} }%
%BeginExpansion
\mathbb{R}
%EndExpansion
$ as%
\begin{equation*}
Tx=\left\{ 
\begin{array}{ccc}
x & , & x\in \left\{ -9,9\right\} \\ 
\frac{1}{x+1} & , & x\in 
%TCIMACRO{\U{211d} }%
%BeginExpansion
\mathbb{R}
%EndExpansion
-\left\{ -9,9\right\}%
\end{array}%
,\right.
\end{equation*}%
for all $x\in 
%TCIMACRO{\U{211d} }%
%BeginExpansion
\mathbb{R}
%EndExpansion
.$ Then $T$ fixes the $3$-ellipse $E\left[ -1,0,1;27\right] $ but $T$ does
not satisfy the condition $\left( E_{k}^{^{\prime \prime \prime }}2\right) .$
\end{example}

In the following theorem, we investigate a contractive condition excludes
the identity map $I_{x}:X\rightarrow X$, defined by $I_{x}\left( x\right) =x$
for all $x\in X,$ in Theorems \ref{thrm1}, \ref{thrm2}, \ref{thrm3} and \ref%
{thrm4}.

\begin{theorem}
\label{thrm5} Let $\left( X,d\right) $ be a metric space, $E\left[
x_{1},...,x_{k};r\right] $ any $k$-ellipse on $X$ and the mapping $\xi
:X\rightarrow \left[ 0,\infty \right) $ defined as in Theorem \ref{thrm1}. $%
T $ satisfies the condition $\left( I_{k}\right) $%
\begin{equation*}
\left( I_{k}\right) \text{ \ \ \ \ \ \ }d\left( x,Tx\right) \leq \frac{\left[
\xi \left( x\right) -\xi \left( Tx\right) \right] }{k+1},
\end{equation*}%
for all $x\in X$ if and only if $T=I_{x}.$
\end{theorem}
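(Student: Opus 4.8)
The plan is to treat the two implications separately, with essentially all of the content in the forward (``only if'') direction. For the easy direction, suppose $T=I_{x}$. Then for every $x\in X$ we have $d\left( x,Tx\right) =d\left( x,x\right) =0$, while $\xi \left( x\right) -\xi \left( Tx\right) =\xi \left( x\right) -\xi \left( x\right) =0$, so $\left( I_{k}\right)$ holds with equality; nothing further is needed.

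For the converse, assume $T$ satisfies $\left( I_{k}\right)$ and fix an arbitrary $x\in X$. The central step is a triangle-inequality estimate on the auxiliary function: since $\xi \left( x\right) =\sum_{i=1}^{k}d\left( x,x_{i}\right)$, for each $i$ we have $d\left( x,x_{i}\right) -d\left( Tx,x_{i}\right) \leq d\left( x,Tx\right)$, and summing over $i=1,\dots ,k$ gives $\xi \left( x\right) -\xi \left( Tx\right) \leq k\,d\left( x,Tx\right)$. This is the only nontrivial input, and it is precisely where the normalizing constant $k+1$ in $\left( I_{k}\right)$ is calibrated.

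Combining this bound with $\left( I_{k}\right)$ yields $d\left( x,Tx\right) \leq \frac{\xi \left( x\right) -\xi \left( Tx\right) }{k+1}\leq \frac{k}{k+1}\,d\left( x,Tx\right)$, hence $\frac{1}{k+1}\,d\left( x,Tx\right) \leq 0$, which forces $d\left( x,Tx\right) =0$ and therefore $Tx=x$. As $x\in X$ was arbitrary, $T=I_{x}$. The main (indeed the only) obstacle is spotting the sharp estimate $\xi \left( x\right) -\xi \left( Tx\right) \leq k\,d\left( x,Tx\right)$; once it is available, the contraction coefficient $\frac{k}{k+1}<1$ closes the argument at once. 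One may also remark in passing that any constant strictly larger than $k$ in place of $k+1$ would suffice, whereas $k$ itself would not, so the choice $k+1$ is natural but not canonical.
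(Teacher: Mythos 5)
Your proof is correct and follows essentially the same route as the paper: the key inequality $\xi\left( x\right) -\xi \left( Tx\right) \leq k\,d\left( x,Tx\right)$, obtained by summing the triangle inequality $d\left( x,x_{i}\right) \leq d\left( x,Tx\right) +d\left( Tx,x_{i}\right)$ over $i$, is exactly the estimate the paper uses, and the conclusion $d\left( x,Tx\right) \leq \frac{k}{k+1}d\left( x,Tx\right)$ is identical (the paper merely phrases it as a contradiction from assuming $x\notin Fix\left( T\right)$, whereas you deduce $d\left( x,Tx\right) =0$ directly). Your closing remark on the sharpness of the constant $k+1$ is a nice addition not present in the paper, but the argument itself is the same.
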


\begin{proof}
Assume that $T$ satisfies the condition $\left( I_{k}\right) $ and $x\notin
Fix\left( T\right) $ for $x\in X.$ Then we get%
\begin{eqnarray*}
d\left( x,Tx\right) &\leq &\frac{\left[ \xi \left( x\right) -\xi \left(
Tx\right) \right] }{k+1} \\
&=&\frac{1}{k+1}\left[ \sum_{i=1}^{k}d\left( x,x_{i}\right)
-\sum_{i=1}^{k}d\left( Tx,x_{i}\right) \right] \\
&\leq &\frac{1}{k+1}\left[ kd\left( x,Tx\right) +\sum_{i=1}^{k}d\left(
Tx,x_{i}\right) -\sum_{i=1}^{k}d\left( Tx,x_{i}\right) \right] \\
&=&\frac{k}{k+1}d\left( x,Tx\right) <d\left( x,Tx\right) ,
\end{eqnarray*}%
a contradiction. Hence it should be $Tx=x$ for each $x\in X$ and $T=I_{x}.$
The converse statement is clearly proved.
\end{proof}

\begin{remark}
\label{rmk7} If a self-mapping $T:X\rightarrow X$ satisfying the conditions
of Theorem \ref{thrm1} (resp. Theorem \ref{thrm2}, Theorem \ref{thrm3} and
Theorem \ref{thrm4} ) does not satisfy the condition $\left( I_{k}\right) $
then we exclude the identity map.
\end{remark}

\section{\textbf{An Application to ($SReLU$)}}

In this section, we investigate a new application to the activation
functions using the notion of fixed $k$-ellipse. Why do we choose an
activation function? Activation functions are extensively used and important
part in neural networks since they decide whether a neuron should be
activated or not. In the literature, there are a lot of examples of
activation functions. For example, one of them is $S$-Shaped Rectified
Linear Activation Unit ($SReLU$) defined by%
\begin{equation*}
SReLU\left( x\right) =\left\{ 
\begin{array}{ccc}
t_{l}+a_{l}\left( x-t_{l}\right)  & , & x\leq t_{l} \\ 
x & , & t_{l}<x<t_{r} \\ 
t_{r}+a_{r}\left( x-t_{r}\right)  & , & x\geq t_{r}%
\end{array}%
\right. ,
\end{equation*}%
where $t_{l},$ $a_{l},$ $t_{r},$ $a_{r}$ are parameters \cite{Jin}.

Let us take $X=%
%TCIMACRO{\U{211d} }%
%BeginExpansion
\mathbb{R}
%EndExpansion
,$ $t_{l}=-6,$ $t_{r}=6,$ $a_{l}=2$ and $a_{r}=3.$ Then we get%
\begin{equation*}
SReLU\left( x\right) =\left\{ 
\begin{array}{ccc}
2x+6 & , & x\leq -6 \\ 
x & , & -6<x<6 \\ 
3x-12 & , & x\geq 6%
\end{array}%
\right.
\end{equation*}%
for all $x\in 
%TCIMACRO{\U{211d} }%
%BeginExpansion
\mathbb{R}
%EndExpansion
$ as seen in Figure \ref{fig:7}.

\begin{figure}[h]
\centering
\includegraphics[width=.6\linewidth]{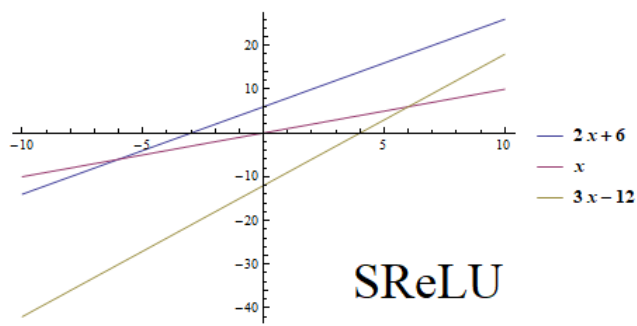} 
\caption{\Small The activation
function $SReLU\left( x\right) $.} \label{fig:7}
\end{figure}

Then we can easily say that the activation function $SReLU\left( x\right) $
fixes at least one $k$-ellipse on $%
%TCIMACRO{\U{211d} }%
%BeginExpansion
\mathbb{R}
%EndExpansion
.$ For example, let us consider the following $3$-ellipses:%
\begin{equation*}
E\left[ -1,0,1;15\right] =\left\{ x\in 
%TCIMACRO{\U{211d} }%
%BeginExpansion
\mathbb{R}
%EndExpansion
:\left\vert x+1\right\vert +\left\vert x\right\vert +\left\vert
x-1\right\vert =15\right\} =\left\{ -5,5\right\} ,
\end{equation*}%
\begin{equation*}
E\left[ -2,0,2;6\right] =\left\{ x\in 
%TCIMACRO{\U{211d} }%
%BeginExpansion
\mathbb{R}
%EndExpansion
:\left\vert x+2\right\vert +\left\vert x\right\vert +\left\vert
x-2\right\vert =6\right\} =\left\{ -2,2\right\} ,
\end{equation*}%
and%
\begin{equation*}
E\left[ -\alpha ,0,\alpha ;9\right] =\left\{ x\in 
%TCIMACRO{\U{211d} }%
%BeginExpansion
\mathbb{R}
%EndExpansion
:\left\vert x+\alpha \right\vert +\left\vert x\right\vert +\left\vert
x-\alpha \right\vert =9,\alpha \in 
%TCIMACRO{\U{211d} }%
%BeginExpansion
\mathbb{R}
%EndExpansion
\right\} =\left\{ -3,3\right\} .
\end{equation*}%
Then $E\left[ -1,0,1;15\right] ,$ $E\left[ -2,0,2;6\right] $ and $E\left[
-\alpha ,0,\alpha ;9\right] $ are fixed $3$-ellipse of $SReLU.$ Also, it can
be easily seen that the activation function $SReLU$ satisfies the conditions 
$\left( E_{k}1\right) $ and $\left( E_{k}2\right) $ (resp. $\left(
E_{k}^{^{\prime }}1\right) $ and $\left( E_{k}^{^{\prime }}2\right) ,$ $%
\left( E_{k}^{^{^{\prime \prime }}}2\right) $). Therefore, we say that there
exists at least one fixed $k$-ellipse of $SReLU.$ But $SReLU$ does not
satisfy the conditions $\left( E_{k}3\right) $ and $\left( E_{k}^{^{\prime
}}3\right) .$ Hence the fixed $k$-ellipse of $SReLU$ is not unique as seen
in the above examples. If we pay attention, this activation function $SReLU$
fixes infinite number of $k$-ellipses. This case is important in view of
increasing the number of fixed points in neural networks.

\end{document}